\newcommand{\BMO}{\mathrm{BMO}}
\newcommand{\R}{\mathbb{R}}
\newcommand{\Z}{\mathbb{Z}}
\newcommand{\F}{\mathcal{F}}
\newcommand{\wh}{\widehat}
\newcommand{\wt}{\widetilde}
\newtheorem{defin}{Definition}
\DeclareMathOperator{\supp}{supp}
\newtheorem{theorem}{Theorem}
\newtheorem{theom}{Theorem}
\newtheorem{lem}{Lemma}
\title[Littlewood--Paley characterization of $\BMO$ and Triebel--Lizorkin]{Littlewood--Paley characterizations of $\BMO$ and Triebel--Lizorkin spaces}
\author{Anton Tselishchev}
\thanks{This research was supported by the Russian Science Foundation (grant No.~18-11-00053).}
\address{Chebyshev Laboratory, St. Petersburg State University, 14th Line V.O., 29B, Saint Petersburg 199178 Russia}
\address{St. Petersburg Department of Steklov Mathematical Institute, Russian Academy of Sciences (PDMI RAS), Russia}
\email{celis-anton@yandex.ru}
\author{Ioann Vasilyev}
\address{Universit\'e Paris-Est, LAMA (UMR 8050), 5 BOULEVARD DESCARTES, 77454, CHAMPS SUR MARNE, France}
\address{St. Petersburg Department of Steklov Mathematical Institute, Russian Academy of Sciences (PDMI RAS), Russia}
\email{milavas@mail.ru}
\subjclass[2010]{42B25}
\date{}
\begin{document}

\begin{abstract}
We prove one generalization of the Littlewood--Paley characterization of the $\BMO$ space where the dilations of a Schwartz function are replaced by a family of functions with suitable conditions imposed on them. We also prove that a certain family of Triebel--Lizorkin spaces can be characterized in a similar way.
\end{abstract}

\maketitle

\section{Introduction}
One of the main results of this article is a generalization of the following statement, proved in the paper~\cite{tselvasl}.

\begin{theom}
\label{thm1}
 Let $\{\psi_n\}_{n\in\Z}$ be a uniformly bounded system of functions defined on $\R^d$, having weak derivatives up to the order $d+1.$ Suppose that the following conditions hold.\\
 \emph{1)}\, $\sum_{n\in\Z}\psi_n(x) \equiv 1$ for all $x\neq 0$.\\
 \emph{2)}\, $\supp\psi_n\subseteq\{x\in\R^d: 2^{n-1}\leq |x| < 2^{n+1}\}$.\\
 \emph{3)}\, $2^{-nd}\int |D^{\alpha}\psi_n(\xi)| d\xi \leq K 2^{-n|\alpha|}$ for $0\leq |\alpha|\leq d+1$.\\
 Here $K$ is a constant which does not depend on $n$.
 Define an operator $\widehat{\Delta_n f}:=\psi_n\hat{f}$ and a norm
 $$
 \|f\|_D:=\sup_{Q}\Big(\frac{1}{|Q|}\int_Q \sum_{2^{-n}\leq l(Q)}|\Delta_n f(x)|^2 dx\Big)^{1/2},
 $$
 where we take the supremum over all cubes $Q$, and $l(Q)$ is the length of the edge of $Q$. Then $C_1\|f\|_D \leq \|f\|_{\BMO}\leq C_2\|f\|_D$ 
 for some positive constants $C_1$ и $C_2$.
\end{theom}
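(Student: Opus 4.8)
The plan is to establish the two inequalities separately; together they assert that the measure $d\mu_f:=\sum_{n}|\Delta_nf(x)|^2\,\delta_{2^{-n}}(dt)\,dx$ is a Carleson measure with norm comparable to $\|f\|_{\BMO}^2$. The preparatory step is to turn hypotheses 2)--3) into pointwise kernel estimates: writing $K_n:=\F^{-1}\psi_n$ (so $\Delta_nf=K_n*f$), the identity $x^\alpha K_n(x)=c_\alpha\F^{-1}(D^\alpha\psi_n)(x)$ and 3) give $|x^\alpha K_n(x)|\lesssim K\,2^{n(d-|\alpha|)}$ for $|\alpha|\le d+1$, and combining $\alpha=0$ with $|\alpha|=d+1$ yields
\[
|K_n(x)|\lesssim\frac{2^{nd}}{(1+2^n|x|)^{d+1}},\qquad |\nabla K_n(x)|\lesssim\frac{2^{n(d+1)}}{(1+2^n|x|)^{d+1}},
\]
the gradient bound because $\xi_j\psi_n$ is still supported in $\{|\xi|\le 2^{n+1}\}$. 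From 2), $\supp\psi_n$ avoids the origin, so $\Delta_n$ annihilates constants; moreover by 1)--2) the function $\wt\psi_n:=\psi_{n-1}+\psi_n+\psi_{n+1}$ equals $1$ on $\supp\psi_n$ (for each $n$ only the three consecutive annuli meet $\supp\psi_n$), so $\Delta_n=\wt\Delta_n\Delta_n$, where $\wt\Delta_n$ has a kernel obeying the same bounds. Finally, uniform boundedness of the $\psi_n$ together with the overlap bound give $\sum_n|\psi_n(\xi)|^2\lesssim1$, hence $\sum_n\|\Delta_ng\|_2^2\lesssim\|g\|_2^2$, and $\|\wt\psi_n\|_2\lesssim 2^{nd/2}$, $\|\xi\wt\psi_n\|_2\lesssim 2^{n(d/2+1)}$. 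Throughout I will assume $f$ belongs to a class in which the reproducing formula $f=\sum_n\Delta_nf$ holds (such as $L^2$), the general case following by a routine limiting argument.

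\emph{The inequality $\|f\|_D\lesssim\|f\|_{\BMO}$.} Fix a cube $Q$ and split $f=f_Q+(f-f_Q)\mathbf{1}_{2Q}+(f-f_Q)\mathbf{1}_{(2Q)^c}=:f_3+f_1+f_2$; as $\Delta_nf_3=0$, only $f_1,f_2$ contribute. For $f_1$, the orthogonality bound above and John--Nirenberg give $\sum_n\int_Q|\Delta_nf_1|^2\le\sum_n\|\Delta_nf_1\|_2^2\lesssim\int_{2Q}|f-f_Q|^2\lesssim|Q|\,\|f\|_{\BMO}^2$. For $f_2$ with $x\in Q$ and $2^{-n}\le l(Q)$, decompose $(2Q)^c$ into the dyadic annuli $2^{k+1}Q\setminus2^kQ$; there $|K_n(x-y)|\lesssim 2^{-n}(2^kl(Q))^{-d-1}$ (since $2^{n+k}l(Q)\gtrsim2^k$), and $\tfrac{1}{|2^kQ|}\int_{2^kQ}|f-f_Q|\lesssim k\|f\|_{\BMO}$, so
\[
|\Delta_nf_2(x)|\lesssim\|f\|_{\BMO}\,\frac{2^{-n}}{l(Q)}\sum_{k\ge1}k2^{-k}\lesssim\|f\|_{\BMO}\,\frac{2^{-n}}{l(Q)} ,
\]
and summing the geometric series $\sum_{2^{-n}\le l(Q)}(2^{-n}/l(Q))^2\lesssim1$ finishes this direction.

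\emph{The inequality $\|f\|_{\BMO}\lesssim\|f\|_D$.} Fix a cube $Q$ with centre $x_Q$ and pick $J$ with $2^{-J}\asymp l(Q)$, so that ``$2^{-n}\le l(Q)$'' reads ``$n\ge J$''. Write $f=f^{\mathrm{h}}+f^{\mathrm{l}}$, $f^{\mathrm{h}}:=\sum_{n\ge J}\Delta_nf$, $f^{\mathrm{l}}:=\sum_{n<J}\Delta_nf$, and take $c_Q:=f^{\mathrm{l}}(x_Q)$. For the high part, use duality: if $\psi\in L^2$ is supported in $Q$ with $\|\psi\|_2\le1$, then by $\Delta_nf=\wt\Delta_n\Delta_nf$ one has $\langle f^{\mathrm{h}},\psi\rangle=\sum_{n\ge J}\langle\Delta_nf,\wt\Delta_n^{*}\psi\rangle$; since $2^{-n}\le l(Q)$ the kernel of $\wt\Delta_n^{*}$ concentrates the mass of $\wt\Delta_n^{*}\psi$ near $2Q$ with tails summable by the kernel decay, so Cauchy--Schwarz, the hypothesis applied to the cube $2Q$, and $\sum_n\|\wt\Delta_n^{*}\psi\|_2^2\lesssim\|\psi\|_2^2$ give $|\langle f^{\mathrm{h}},\psi\rangle|\lesssim|Q|^{1/2}\|f\|_D$, i.e.\ $\tfrac{1}{|Q|}\int_Q|f^{\mathrm{h}}|^2\lesssim\|f\|_D^2$ (this is the $T^1_2$--$T^\infty_2$ tent-space duality in disguise). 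For the low part, one first proves $\|\Delta_nf\|_{L^\infty(Q)}\lesssim\|f\|_D$ and $\|\nabla\Delta_nf\|_{L^\infty(Q)}\lesssim2^n\|f\|_D$ for every $n$: applying the hypothesis to a cube centred at $x_Q$ of side $\asymp\max(2^{-n},l(Q))$ controls the $L^2$ norm of $\Delta_nf$ on that cube and its dilates, and then writing $\Delta_nf=\wt\Delta_n\Delta_nf$ and splitting the convolution into the central cube (bounded using $\|\wt\psi_n\|_2$, resp.\ $\|\xi\wt\psi_n\|_2$) plus rapidly decaying tails yields the claim. The mean value theorem then gives $|\Delta_nf(x)-\Delta_nf(x_Q)|\lesssim l(Q)2^n\|f\|_D$ for $x\in Q$, hence
\[
|f^{\mathrm{l}}(x)-c_Q|\le\sum_{n<J}|\Delta_nf(x)-\Delta_nf(x_Q)|\lesssim l(Q)\|f\|_D\sum_{n<J}2^n\lesssim\|f\|_D ,
\]
which moreover shows the series converge absolutely. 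Combining the two parts, $\tfrac{1}{|Q|}\int_Q|f-c_Q|^2\lesssim\|f\|_D^2$ for every cube $Q$, so $\|f\|_{\BMO}\lesssim\|f\|_D$.

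\emph{Expected main difficulty.} All the delicate points sit in the second inequality: extracting genuine pointwise control of $\Delta_nf$ and $\nabla\Delta_nf$ near $Q$ from the merely averaged hypothesis, and making the localization (the tail estimates in the high-frequency part) rigorous — exactly where the annular support 2) and the $L^1$-bounds 3) on the derivatives of $\psi_n$ are indispensable. A secondary issue to be careful about is the functional-analytic setup: pinning down the class of admissible $f$ for which the reproducing formula and the pairing manipulations are valid, and reducing the general statement to it.
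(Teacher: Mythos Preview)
Your proposal is correct and shares the overall architecture of the paper's proof: the inequality $\|f\|_D\lesssim\|f\|_{\BMO}$ is handled identically (the same splitting $f=f_Q+(f-f_Q)\chi_{2Q}+(f-f_Q)\chi_{(2Q)^c}$, $L^2$ almost-orthogonality for the near part, kernel decay combined with the geometric BMO annulus bound for the far part), and for the converse both you and the paper take the constant $c_Q=\sum_{n\text{ low}}\Delta_nf(x_Q)$ and separate high from low frequencies.

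Where you diverge is in the execution of the converse. For the high-frequency piece the paper writes $\Delta_nf=S_n\ast P_n\ast f$, breaks the $u$-integral at $\partial Q$, and estimates the resulting terms $B_3,B_4$ by a rather laborious direct computation: it tiles $\R^d$ by cubes $\sigma_k$ of side $\asymp 2^{-n}$, reduces matters to integrals of $|x-u|^{-2a}$ over product regions, and evaluates these by hand (with a parity-of-$d$ case split). Your duality/tent-space argument replaces all of this: once the localization of $\wt\Delta_n^{*}\psi$ near $2Q$ and the summability of its tails are written out carefully (that is the one step you leave terse), Cauchy--Schwarz together with $\sum_n\|\wt\Delta_n^{*}\psi\|_2^2\lesssim\|\psi\|_2^2$ finishes the job in a few lines. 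For the low-frequency piece the paper estimates $\int|f\ast P_n(-u)|\,|S_n(u-x)-S_n(u)|\,du$ directly via the Hausdorff--Young inequality, the Leibniz rule, and another dyadic tiling; your route --- first proving the clean bounds $\|\Delta_nf\|_\infty\lesssim\|f\|_D$ and $\|\nabla\Delta_nf\|_\infty\lesssim 2^n\|f\|_D$ from $\Delta_n=\wt\Delta_n\Delta_n$ and kernel decay, then applying the mean-value theorem --- is more modular. The trade-off is that the paper's hands-on estimates are written so as to work under the weaker $L^2$-type Mikhlin hypothesis of its main theorem (only $[d/2]+1$ derivatives), whereas your pointwise kernel bounds $|K_n(x)|\lesssim 2^{nd}(1+2^n|x|)^{-(d+1)}$ genuinely use the $L^1$ condition with $d+1$ derivatives; conversely, your argument is shorter and isolates the underlying mechanism (Carleson duality for the high modes, sup-norm control for the low modes) more transparently.
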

This theorem is in turn a generalization of Bochkarev's inequality (see~\cite{Bochkarev1}) obtained via replacing the de la Vall\'ee--Poussin kernels used by Bochkarev with a more general system of functions. We draw the reader's attention to the fact that this inequality was applied by S.V.Bochkarev to various problems of the theory of trigonometric sums, see~\cite{Bochkarev2} for further details.

In this paper we prove a generalization of Theorem~\ref{thm1} obtained by substituting $\{\psi_n\}_{n\in \mathbb Z}$ with an even more general system of functions. In more details we completely dispense with the ``$d+1$ derivatives condition'', demanding bounds on the derivatives up to the order $[d/2]+1$ instead. The price to pay is the norm utilized, which we consider to be $L^2$ and not $L^1$. Let us state the main result of this paper.

\begin{theorem}
\label{thm2}
 Let $\{\psi_n\}_{n\in\Z}$ be a uniformly bounded system of functions defined on $\R^d$, having weak derivatives up to the order $a=[d/2]+1.$ Suppose that the following conditions hold.\\
 \emph{1)}\, $\sum_{n\in\Z}\psi_n(x) \equiv 1$ for all $x\neq 0$.\\
 \emph{2)}\, $\supp\psi_n\subseteq\{x\in\R^d: 2^{n-1}\leq |x| < 2^{n+1}\}$.\\
 \emph{3)}\, ($2^{-nd}\int |D^{\alpha}\psi_n(\xi)|^2 d\xi)^{1/2} \leq K 2^{-n|\alpha|}$ for $0\leq |\alpha|\leq a$.\\
 Here $K$ is a constant which does not depend on $n$.
 Define an operator $\widehat{\Delta_n f}:=\psi_n\hat{f}$ and a norm
 $$
 \|f\|_D:=\sup_{Q}\Big(\frac{1}{|Q|}\int_Q \sum_{2^{-n}\leq l(Q)}|\Delta_n f(x)|^2 dx\Big)^{1/2},
 $$
 where we take the supremum over all cubes $Q$, and $l(Q)$ is the length of the edge of $Q$. Then $C_1\|f\|_D \leq \|f\|_{\BMO}\leq C_2\|f\|_D$ 
 for some positive constants $C_1$ и $C_2$.
\end{theorem}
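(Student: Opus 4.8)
The plan is to run the two standard halves of a Littlewood--Paley characterization, the new point being that hypotheses \emph{2)}--\emph{3)} with the reduced order $a=[d/2]+1$ still furnish the kernel estimates that make this possible. Write $\Delta_n f=\phi_n*f$, $\phi_n=\F^{-1}\psi_n$, and rescale to the unit annulus: $\psi_n(\xi)=\Psi_n(2^{-n}\xi)$, $\phi_n(x)=2^{nd}\Phi_n(2^n x)$, where $\Psi_n$ is supported in $\{1/2\le|\xi|<2\}$, uniformly bounded, vanishes at $0$, and satisfies $\|D^{\alpha}\Psi_n\|_{L^2}\le C$ for $|\alpha|\le a$ by \emph{3)}. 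Since $x^{\beta}\Phi_n$ is, up to a constant, the inverse Fourier transform of $D^{\beta}\Psi_n$, Plancherel gives $\|(1+|\cdot|)^{a}\Phi_n\|_{L^2}\le C$ uniformly in $n$; and because $2a>d$ the weight $(1+|\cdot|)^{2\delta-2a}$ is integrable for every $0\le\delta<a-d/2$, so
$$
\int_{|w|>R}|\Phi_n(w)|^2\,dw\le CR^{-2a}\quad(R\ge1),\qquad \int_{\R^d}|\Phi_n(w)|(1+|w|)^{\delta}\,dw\le C .
$$
The same bounds, with the obvious scaling factors, hold for $\nabla\phi_n$ and for the companion kernels introduced below; and $\int\phi_n=\psi_n(0)=0$, so every $\Delta_n$ annihilates constants. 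This is exactly where the hypothesis $a=[d/2]+1$ is spent, and I regard it as the crux: one gets no pointwise decay of the $\Phi_n$, only these integrated bounds, and the rest of the argument must respect that.

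For the inequality $\|f\|_D\le C\|f\|_{\BMO}$, fix a cube $Q$ with $2^{-N}\le l(Q)<2^{-N+1}$ and write $f=f_{3Q}+g+h$ with $g=(f-f_{3Q})\mathbf 1_{3Q}$ and $h=(f-f_{3Q})\mathbf 1_{(3Q)^c}$. The constant $f_{3Q}$ is killed by every $\Delta_n$. For $g$ I would use Plancherel with the bounded overlap $\sum_n|\psi_n|^2\le C$ to get $\int_{\R^d}\sum_n|\Delta_n g|^2\le C\|g\|_{L^2}^2=C\int_{3Q}|f-f_{3Q}|^2\le C|Q|\,\|f\|_{\BMO}^2$ (the $L^2$ form of John--Nirenberg). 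For $h$ and $x\in Q$, $n\ge N$, I would rescale, split $(3Q)^c$ into dyadic annuli about $x$, and apply Cauchy--Schwarz on each annulus together with the displayed kernel bounds and $\int_{Q_j}|f-f_{3Q}|^2\le C(j+1)^2|Q_j|\,\|f\|_{\BMO}^2$ on the concentric dilate $Q_j$ of side $\sim 2^{j}l(Q)$; summing over the annuli and then squaring and summing over $n\ge N$ produces convergent geometric series (again using $a>d/2$) and the bound $\frac1{|Q|}\int_Q\sum_{n\ge N}|\Delta_n h|^2\le C\|f\|_{\BMO}^2$. Adding the three pieces proves this direction.

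For the reverse inequality $\|f\|_{\BMO}\le C\|f\|_D$, put $\wt\psi_n:=\psi_{n-1}+\psi_n+\psi_{n+1}$; by \emph{2)} one has $\wt\psi_n\equiv1$ on $\supp\psi_n$, hence $\psi_n\wt\psi_n=\psi_n$, and combined with $\sum_n\psi_n\equiv1$ off the origin this yields the reproducing identity $\langle f,g\rangle=\sum_n\langle\Delta_nf,\wt\Delta_ng\rangle=\int_{\R^d}\sum_n\Delta_nf(x)\,\overline{\wt\Delta_ng(x)}\,dx$ for $g$ in a dense subclass of $H^1$ (say Schwartz functions with $\hat g$ vanishing near $0$), where $\widehat{\wt\Delta_ng}=\wt\psi_n\hat g$. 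Applying the Coifman--Meyer--Stein duality of the tent spaces $T^{\infty}$ and $T^1$,
$$
|\langle f,g\rangle|\le C\,\big\|\{\Delta_nf\}\big\|_{T^{\infty}}\,\big\|\{\wt\Delta_ng\}\big\|_{T^1},
$$
where $\|\{\Delta_nf\}\|_{T^{\infty}}$ equals, up to a dimensional constant, $\|f\|_D$ (by construction the Carleson box over $Q$ is the set of $n$ with $2^{-n}\le l(Q)$), and $\|\{\wt\Delta_ng\}\|_{T^1}$ is the $L^1$ norm of the conical square function of $g$ built from $\{\wt\Delta_n\}$, which is $\le C\|g\|_{H^1}$ by the classical $H^1$-atom argument; the latter uses only that the $\wt\phi_n$ have uniformly bounded $L^1$ norms, vanishing integrals, and a uniform Lipschitz-type $L^1$ modulus of continuity, all of which follow from the first paragraph. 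Taking the supremum over $\|g\|_{H^1}\le1$ and using $H^1$--$\BMO$ duality gives the claim.

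The one genuinely delicate point is the one flagged above: with only $[d/2]+1$ derivatives in $L^2$ there is no pointwise decay of the convolution kernels, so each half of the argument --- and in particular the $H^1\to T^1$ estimate for the companion family in the last step --- must be carried out using solely the integrated (weighted $L^2$ and $L^1$) bounds and the $L^1$ modulus of continuity; once that discipline is imposed, everything reduces to by now classical Littlewood--Paley and Carleson-measure technology. Routine density and limiting arguments take care of the meaning of $\Delta_nf$ and of the reproducing formula for an arbitrary $f$.
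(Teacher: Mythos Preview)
Your argument for $\|f\|_D\lesssim\|f\|_{\BMO}$ is essentially the paper's: the same local/far/constant decomposition of $f$ relative to a fixed cube, the $L^2$ square-function bound for the local piece, and for the far piece the same Cauchy--Schwarz-on-dyadic-annuli together with the weighted $L^2$ kernel bound $\int|x|^{2a}|S_n(x)|^2\,dx\lesssim 2^{nd-2na}$, which is where $a>d/2$ enters.

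For the reverse inequality $\|f\|_{\BMO}\lesssim\|f\|_D$ you take a genuinely different route. The paper proceeds \emph{directly}: it picks the constant $c=\sum_{2^{-(n-10)}\ge l(Q)}(f*S_n)(y)$ at the center $y$ of $Q$, which splits the oscillation into a high-frequency piece $B_1$ and a low-frequency difference piece $B_2$; each is then estimated by hand, by tiling $\R^d$ into cubes of scale $\sim2^{-n}$, exploiting the almost-orthogonality $S_n=P_n*S_n$, and --- for $B_2$ --- using the cancellation $S_n(\cdot-x)-S_n(\cdot-y)$ together with a Hausdorff--Young/H\"older argument tuned by a parameter $q>d/(2a-d)$. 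Your approach instead packages the whole step into $H^1$--$\BMO$ duality plus the $T^1$--$T^\infty$ tent-space duality, reducing everything to the single estimate $\|\{\wt\Delta_n g\}\|_{T^1}\lesssim\|g\|_{H^1}$ for the companion family, which indeed follows from the atomic decomposition and the uniform $L^1$ size and smoothness of the rescaled kernels that you derived. Your route is shorter and more conceptual but imports the Coifman--Meyer--Stein and Fefferman--Stein machinery as black boxes; the paper's route is longer and more intricate but is self-contained and makes the role of the H\"ormander--Mikhlin hypothesis completely explicit at every step. Both are correct; the one point to watch in yours is that the pairing $\langle f,g\rangle$ for $f\in\BMO$ must be interpreted through the $H^1$--$\BMO$ duality (modulo constants), and the reproducing formula is first justified on the dense class you name and then extended --- exactly as you indicate in your last sentence.
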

\noindent
Several remarks are in order. First, Theorem~\ref{thm2} is indeed a generalization of Theorem~\ref{thm1}; this can be easily proved by the Sobolev--Gagliardo--Nirenberg inequality. Second, we emphasize that the conditions imposed in Theorem~\ref{thm2} are exactly those of the H\"ormander--Mikhlin multiplier theorem (contact~\cite{ClassGrafakos} or~\cite{RdF} for the proof). Finally, we remark that it would not be difficult to obtain a variant of the Littlewood--Paley decomposition for the product space $\BMO(\R^{d_1}\times\R^{d_2})$ in the spirit of Theorem~\ref{thm2} along the lines of this result. However we do not attack this problem here. 

Theorem~\ref{thm2} provides a Littlewood--Paley characterization of the space $\BMO(\R^d).$ In the second main result of this article we establish such a characterization for the scale of Triebel--Lizorkin spaces. We introduce those in the following

\begin{defin} Let $\varphi$ be a collection of functions on $\mathbb R^d,$  $\varphi= \{ \varphi_n\}_{n=1}^{\infty},$ such that 
\begin{enumerate}
	\item [1)] \label{one} $\mathrm{supp} \; \varphi_n \subseteq \{x \in \mathbb R^d :2^{n-1} \leq |x|<2^{n+1} \} $,
   \item [2)] \label{two} $\sum_{n \in \mathbb Z} \varphi_n \left( x \right) = 1 \text{ for all}\; x \neq 0$,
 \item [3)] \label{three} $2^{-nd} \int_{\mathbb R^d} |D^\alpha \varphi_n \left( \xi \right)|d\xi \leq K_\varphi \cdot 2^{-n|\alpha|} \; \text{for all}\; 0 \leq|\alpha | \leq d+1.$
				\end{enumerate}
The Triebel--Lizorkin space $\mathfrak{F}_{\infty, \varphi}^{0,p},$ for $p \in \left( 1, + \infty \right)$ is defined as follows: 
$f \in\mathfrak{F}_{\infty, \varphi}^{0,p} $ if and only if 
$$\|f\|_\varphi ^p := \sup \limits_Q \left(\frac{1}{|Q|} \int \limits_Q \sum \limits_{2^{-n} \leq l\left( Q \right)} |\Delta_{n,\varphi} f\left( x \right)|^p dx\right)< \infty ,$$ where the $\sup$ is taken over all cubes $Q \subset \mathbb R^d$, $l \left(Q\right)$ is the length of an edge of $Q,$ and operators $\Delta_{n,\varphi}$ are defined as:
$$\widehat{\Delta_{n,\varphi} f}\left(x\right) = \varphi_n \left(x\right) \cdot \widehat{f\left(x\right)}.$$
\end{defin}
\noindent
The classical Triebel--Lizorkin spaces arise in a particular case of this definition, namely when the functions $\varphi_n$ are dilatations of some function from the Schwartz class, see~\cite{ModGrafakos}. Our second main result here is the following theorem.
\begin{theorem}
\label{tribl1}
Let  $\varphi = \{\varphi_n \}_{n=1}^{\infty}$ and $\psi =\{\psi_n \}_{n=1}^{\infty}$ be two collections of functions satisfying the conditions of the definition. Then
$$ \|f\|_\varphi \lesssim \|f\|_\psi,$$ where the constant depends only on $p$ and $K_\psi$
\end{theorem}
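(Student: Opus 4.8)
The plan is to expand each $\varphi$-block of $f$ via the $\psi$-decomposition and then, for a fixed cube, to separate the resulting convolutions into a local part and a tail part; one may of course assume $\|f\|_\psi<\infty$, and that $f$ is a tempered distribution for which all the $\Delta_{n,\psi}f$ and $\Delta_{n,\varphi}f$ are well defined. \emph{Expansion.} Because $\supp\varphi_n\subseteq\{2^{n-1}\leq|\xi|<2^{n+1}\}$, $\sum_m\psi_m\equiv1$ off the origin, and $\varphi_n\psi_m\equiv0$ for $|m-n|\geq2$, we have $\varphi_n=\sum_{|m-n|\leq1}\varphi_n\psi_m$, so that $\Delta_{n,\varphi}f=\sum_{|m-n|\leq1}T_n(\Delta_{m,\psi}f)$, where $T_n$ is the Fourier multiplier with symbol $\varphi_n$, i.e.\ $T_nh=\check\varphi_n\ast h$. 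From condition~3) and integration by parts one obtains the uniform kernel estimate $|\check\varphi_n(x)|\lesssim 2^{nd}(1+2^n|x|)^{-(d+1)}$; in particular $\|\check\varphi_n\|_{L^1}\lesssim1$, hence each $T_n$ is bounded on $L^p(\R^d)$ uniformly in $n$. Write $g_m:=\Delta_{m,\psi}f$.

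\emph{Localisation and the near part.} Fix a cube $Q$ and let $N\in\Z$ be such that $2^{-N}\leq l(Q)<2^{-N+1}$; then $\sum_{2^{-n}\leq l(Q)}=\sum_{n\geq N}$ and $2^nl(Q)\geq1$ for $n\geq N$. Choose a dilate $Q^{*}$ of $Q$ (with factor depending only on $d$) large enough that $|x-y|\simeq 2^jl(Q)$ for $x\in Q$, $y\in 2^{j+1}Q^{*}\setminus 2^jQ^{*}$, $j\geq1$, and split $\Delta_{n,\varphi}f=P_nf+F_nf$ according to $g_m=g_m\chi_{Q^{*}}+g_m\chi_{\R^d\setminus Q^{*}}$. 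Using $L^p$-boundedness of $T_n$ and noting that only $m\geq N-1$ occur, for which $2^{-m}\leq l(Q^{*})$, one gets
$$\frac1{|Q|}\int_Q\sum_{n\geq N}|P_nf|^p\lesssim\frac1{|Q|}\sum_{m\geq N-1}\|g_m\|_{L^p(Q^{*})}^p\leq\frac{|Q^{*}|}{|Q|}\,\|f\|_\psi^p\lesssim\|f\|_\psi^p.$$

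\emph{The far part and conclusion.} For $x\in Q$ write $\R^d\setminus Q^{*}=\bigcup_{j\geq1}S_j$ with $S_j=2^{j+1}Q^{*}\setminus2^jQ^{*}$. On $S_j$ the kernel bound gives $|\check\varphi_n(x-y)|\lesssim 2^{-n}2^{-j(d+1)}l(Q)^{-(d+1)}$ for $n\geq N$; combined with H\"older's inequality on $S_j$ (of measure $\lesssim(2^jl(Q))^d$) and $\int_{2^{j+1}Q^{*}}|g_m|^p\leq|2^{j+1}Q^{*}|\,\|f\|_\psi^p$, the powers of $2^j$ and of $l(Q)$ cancel exactly and one obtains $\int_{S_j}|\check\varphi_n(x-y)|\,|g_m(y)|\,dy\lesssim 2^{-n}l(Q)^{-1}2^{-j}\|f\|_\psi$. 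Summing the geometric series in $j$ and over the three indices $m$ yields the pointwise bound $|F_nf(x)|\lesssim 2^{-n}l(Q)^{-1}\|f\|_\psi$ on $Q$, whence, using $\sum_{n\geq N}2^{-np}\lesssim2^{-Np}\leq l(Q)^p$,
$$\frac1{|Q|}\int_Q\sum_{n\geq N}|F_nf|^p\lesssim\|f\|_\psi^p.$$
Adding the near and far estimates and taking the supremum over $Q$ gives $\|f\|_\varphi\lesssim\|f\|_\psi$.

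The step I expect to be the main obstacle is the far-part estimate. The series over dyadic shells converges only because the spatial decay of order $d+1$ of the kernels $\check\varphi_n$ exactly balances the volume growth $2^{jd}$ of the dilated cubes and the H\"older loss $2^{jd(1-1/p)}$ on each shell, the net factor being the summable $2^{-j}$; this is precisely what the ``$d+1$ derivatives'' condition of the definition provides. A minor technical nuisance is that $|x-y|\simeq 2^jl(Q)$ fails for the smallest values of $j$, which is the reason for first passing from $Q$ to the dilate $Q^{*}$.
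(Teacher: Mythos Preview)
Your proof is correct and follows essentially the same route as the paper: expand $\Delta_{n,\varphi}f$ through the $\psi$-partition, split into a near part over a fixed dilate of $Q$ (handled by the uniform $L^1$ bound on $\check\varphi_n$, equivalently uniform $L^p$-boundedness of $T_n$) and a far part over annuli (handled by the pointwise decay $|\check\varphi_n(x)|\lesssim 2^{-n}|x|^{-(d+1)}$). The only cosmetic differences are that the paper uses the linear shells $(i+1)Q\setminus iQ$ (yielding a $\sum i^{-2}$) rather than your dyadic shells $2^{j+1}Q^{*}\setminus 2^{j}Q^{*}$ (yielding a $\sum 2^{-j}$), and that the paper isolates the bound $\frac{1}{|\Omega_i|}\int_{\Omega_i}|f\ast P_{n,\psi}|\lesssim\|f\|_\psi$ as a separate lemma, whereas you obtain the analogous control directly from a single term of the defining supremum applied to the dilated cube $2^{j+1}Q^{*}$.
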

\noindent
Despite the fact that it would be easy to strengthen Theorem~\ref{tribl1} via imposing a condition demanding only $[d/2]+1$ derivatives as we made it in Theorem~\ref{thm2}, we intentionally keep the stronger condition with $d+1$ derivatives (the one used in the paper~\cite{tselvasl}) and the norm $L^1$ here in order for the reader to compare the proofs of Theorems~\ref{tribl1} and~\ref{thm2}.

Let us also point out that obtaining a Littlewood--Paley decomposition for the scale of Triebel--Lizorkin spaces is a more difficult problem in comparison with the scale of Besov spaces (see~\cite{ModGrafakos}, pages 73--76). One more remark is that it would be interesting (at least in our opinion) to compare the results of the present paper with those of~\cite{finiki}, since similar characterizations were used there in order to describe some important properties of Triebel--Lizorkin and Besov spaces.

The authors are kindly grateful to their scientific adviser Sergei V. Kislyakov for having posed the problem and for the continuous support during the process of its solution.

We begin with the characterization of the scale of Triebel--Lizorkin spaces.
\section{The Triebel--Lizorkin spaces}
\begin{proof}
Denote $S_{n,\varphi}:=\check {\varphi_n}$ and $P_n :=S_{n-1} +S_n +S_{n+1}.$ Then
$$\Delta_{n,\varphi}f\left( x \right) = S_{n,\varphi} \ast f\left( x \right) = S_{n,\varphi} \ast P_{n,\psi} \ast f\left( x \right).$$
Indeed this follows from the facts that $\sum_n \psi_n =1$ and that $\mathrm{supp }\; \psi_n \subseteq \{ x \in \mathbb R^d : 2^{n-1} \leq |x| <2^{n+1} \}.$
We fix a cube $Q \subset \mathbb R^d$ and write
\begin{equation}
\begin{split}
\label{pyatodin}
&\frac{1}{|Q|} \int \limits_Q \sum \limits_{n \geq - \log_2 l\left( Q \right)} |\Delta_{n,\varphi} f \left(x\right) |^p dx=\\
&\frac{1}{|Q|} \int \limits_Q  \sum \limits_{n \geq - \log_2 l\left( Q \right)} | f \ast S_{n,\varphi} \ast P_{n,\psi} \left( x \right) |^p dx\leq\\
&\frac{1}{|Q|} \int \limits_Q  \sum \limits_{n \geq - \log_2 l\left( Q \right)} \biggl| \int \limits_{2Q} f  \ast P_{n,\psi} \left( y \right) \cdot S_{n,\varphi}  \left(x - y \right) dy  \biggr|^p dx +\\
&\frac{1}{|Q|} \int \limits_Q  \sum \limits_{n \geq - \log_2 l\left( Q \right)}  \biggl| \int \limits_{{\mathbb R^d}\backslash 2Q} f  \ast P_{n,\psi} \left( y \right)\cdot S_{n,\varphi}  \left(x - y \right) dy   \biggr|^p dx ,
\end{split}
\end{equation}
where $2Q$ stands for the cube whose center coincides with that of $Q$, and whose edge is two times longer than the
edge of $Q.$

We first estimate the first integral:
\begin{equation}
\begin{split}
&I := \frac{1}{|Q|} \int \limits_Q  \sum \limits_{n \geq - \log_2 l\left( Q \right)}  \biggl| \int \limits_{2Q} f  \ast P_{n,\psi} \left( y \right) \cdot S_{n,\varphi}  \left(x - y \right) dy   \biggr|^p dx \lesssim\\
&\frac{1}{|Q|} \sum \limits_{n \geq - \log_2 l\left( Q \right)} \| \Delta_{n,\varphi} \left( \left( f  \ast P_{n,\psi} \right) \cdot \chi_{2Q} \right) \|_{L^p \left( \mathbb R^d \right)}^p .
\end{split}
\end{equation}
Using the Young inequality we conclude that for all $g \in L^p \left( \mathbb R^d \right)$ the following inequality holds:
$$ \| \Delta_{n,\varphi}\: g\|_{L^p \left( \mathbb R^d \right)} = \|S_{n,\varphi} \ast g \|_{L^p \left( \mathbb R^d \right)} \leq \| S_{n,\varphi} \|_{L^1 \left( \mathbb R^d \right)} \cdot \|g\|_{L^p \left( \mathbb R^d \right)}.$$

Next, for all $x \in \mathbb R^d$ we have the inequalities $$|S_{n,\varphi} \left( x \right)| \lesssim 2^{nd}$$ and $$|S_{n,\varphi} \left( x \right)| \lesssim 2^{-n} \cdot |x|^{- \left( d + 1 \right)}.$$
Indeed the first estimate is a piece of cake: 
$$|S_{n, \varphi}\left( x \right)| =  \biggl| \int \limits_{\mathbb R^d} \varphi_n \left( \xi \right) \cdot e^{2\pi i x\cdot \xi} d\xi  \biggr| \leq \| \varphi_n \|_{L^1 \left( \mathbb R^d \right)} \lesssim 2^{nd}.$$
In order to prove the second one we first infer that it is enough show that 
$$ | x_j ^{d+1} \cdot S_{n, \varphi} \left( x \right)| \lesssim 2^{-n}$$
for all $j \in [1,\ldots,d].$ We are going to prove these in the following way: we majorize the left--hand side of the last inequality using the properties of the Fourier transformation and the condition 3 with the corresponding multi--index $d_j:$
\begin{equation*}
\begin{split}
 &| x_j ^{d+1} \cdot S_{n, \varphi} \left( x \right) | = | x_j ^{d+1} \cdot \check{\varphi}_n | \lesssim | \widecheck {D^{d_j}\varphi_n}\left( x \right)| \lesssim \\
 &\| D^{d_j} \varphi_n \left( x \right)\|_{L^1 \left( \mathbb R^d \right)} \lesssim 2^{nd - n|d_j |} = 2^{nd-n\left( d+1 \right)} = 2^{-n},
\end{split}
\end{equation*} 
and the second estimate follows as well. 

Note that these two inequalities yield that
\begin{equation*}
\begin{split}
&\| S_{n, \varphi} \|_{L^1 \left( \mathbb R^d \right)}  \leq \int \limits_{\{x:|x|\leq 2^{-n}  \}} | S_{n, \varphi} \left( x \right) | dx + \int \limits_{\{x:|x| > 2^{-n}  \}}  | S_{n, \varphi} \left( x \right) | dx \lesssim\\
&2^{nd} \cdot 2^{-nd} + \int \limits_{\{x:|x| > 2^{-n}\}} 2^{-n}\cdot |x|^{-\left(d+1\right)} dx \lesssim 1.
\end{split} 
\end{equation*}
Let us now continue the estimate of the term $I$:
\begin{equation*}
\begin{split}
&I\lesssim \frac{1}{|Q|} \sum \limits_{n \geq-\log_2 l \left( 2Q \right) } \int \limits_{2Q} |f \ast P_{n, \psi} \left( \xi \right)|^p d\xi \lesssim\\
&\frac{1}{|2Q|} \sum \limits_{n \geq-\log_2 l \left( 2Q \right) } \int \limits_{2Q} |f \ast S_{n, \psi} \left( \xi \right)|^p d\xi \lesssim  \|f\|^p_\psi.
\end{split}
\end{equation*}
We proceed to the second term from~\eqref{pyatodin}, which we denote $J$:
$$ J := \frac{1}{|Q|} \int \limits_Q  \sum \limits_{n \geq - \log_2 l\left( Q \right)}  \biggl| \int \limits_{{\mathbb R^d}\backslash 2Q} f  \ast P_{n,\psi} \left( y \right) \cdot S_{n,\varphi}  \left(x - y \right) dy \biggr|^p dx.$$ 
Let us first estimate the expression that is inside of the integral over the set $Q$:
$$ \biggl| \int \limits_{{\mathbb R^d}\backslash 2Q} | f \ast P_{n,\psi} \left( y \right) \cdot S_{n,\varphi}  \left(x - y \right) dy \biggr| \leq \sum \limits_{i=2} ^\infty \int \limits_{\Omega_i}| f \ast P_{n,\psi} \left( y \right)| \cdot |S_{n,\varphi}  \left(x - y \right)| dy \leq \ldots\, ,$$
where $\Omega_i = \left(i + 1 \right) Q \backslash iQ. $ We use the bound $ |S_n \left(x \right) | \leq 2^{-n} \cdot |x|^{-\left(d + 1 \right)}$ and the fact that if $x\in Q$ and $y\in \Omega_i$, then $|x-y|\asymp i l(Q)$ and write
\begin{equation*}
\begin{split}
&\ldots \leq \sum \limits_{i=2} ^\infty \int \limits_{\Omega_i} \frac{2^{-n} \cdot |f \ast P_{n, \psi} \left( y \right)|} {\left( i \cdot l\left(Q \right) \right)^{d+1}}dy \leq \frac{2^{-n}}{l \left(Q \right)^{d+1}} \cdot \sum \limits_{i=2}^\infty \frac{\|f\|_\psi \cdot |\Omega_i|}{i^{d+1}} \leq\\
&\frac{2^{-n} \cdot l\left(Q\right)^d}{l\left(Q\right)^{d+1}}\cdot \|f\|_\psi \cdot \sum \limits_{i=2}^\infty \frac{1}{i^2} \lesssim \frac{2^{-n}}{l\left(Q\right)} \cdot \|f\|_\psi,
\end{split}
\end{equation*}
where in the last inequality we have used the lemma that follows.
\begin{lem}
Let $\Omega_i$ be as above. Then
$$\frac{1}{|\Omega_i|} \int \limits_{\Omega_i} |f \ast P_{n,\psi}\left(y\right)| dy \lesssim \|f\|_\psi, $$
and the lemma follows.
\end{lem}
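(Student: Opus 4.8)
The plan is to unwind the definition of $P_{n,\psi}$: since $P_{n,\psi}=S_{n-1,\psi}+S_{n,\psi}+S_{n+1,\psi}$, we have $f\ast P_{n,\psi}=\Delta_{n-1,\psi}f+\Delta_{n,\psi}f+\Delta_{n+1,\psi}f$, so by the triangle inequality it suffices to bound
$$\frac{1}{|\Omega_i|}\int_{\Omega_i}|\Delta_{m,\psi}f(y)|\,dy\lesssim\|f\|_\psi$$
for each of the three indices $m\in\{n-1,n,n+1\}$ and then add the three estimates. Recall that the index $n$ in the sum we are estimating satisfies $2^{-n}\le l(Q)$, hence $2^{-m}\le 2\,l(Q)$ for every such $m$. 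The first step is to replace the $L^1$ average by the $L^p$ average via H\"older's inequality:
$$\frac{1}{|\Omega_i|}\int_{\Omega_i}|\Delta_{m,\psi}f(y)|\,dy\le\Big(\frac{1}{|\Omega_i|}\int_{\Omega_i}|\Delta_{m,\psi}f(y)|^p\,dy\Big)^{1/p}.$$

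Next I would cover the shell $\Omega_i=(i+1)Q\setminus iQ$ by a family of cubes $\{Q_j\}$, each of side length $2\,l(Q)$, with $\#\{Q_j\}\lesssim i^{d-1}$. This is legitimate because $\Omega_i$ lies within distance $\tfrac12 l(Q)$ of the boundary of $(i+1)Q$, a set of measure $\asymp i^{d-1}l(Q)^d$. The point of choosing the common side length $2\,l(Q)$ rather than $l(Q)$ is that then $l(Q_j)=2\,l(Q)\ge 2^{-m}$ for all three admissible values of $m$, so $|\Delta_{m,\psi}f|^p$ is one of the summands occurring in the definition of $\|f\|_\psi$ for the cube $Q_j$, whence
$$\int_{Q_j}|\Delta_{m,\psi}f(y)|^p\,dy\le\int_{Q_j}\sum_{2^{-k}\le l(Q_j)}|\Delta_{k,\psi}f(y)|^p\,dy\le |Q_j|\,\|f\|_\psi^p.$$
Summing over $j$ and using $\sum_j|Q_j|\lesssim i^{d-1}l(Q)^d\asymp|\Omega_i|$ gives $\int_{\Omega_i}|\Delta_{m,\psi}f|^p\lesssim|\Omega_i|\,\|f\|_\psi^p$; dividing by $|\Omega_i|$, extracting the $p$-th root, and adding up the contributions of $m=n-1,n,n+1$ yields the lemma.

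The only genuinely delicate point is the geometric bookkeeping in the covering step: confirming the count $\#\{Q_j\}\lesssim i^{d-1}$ and checking that the common side length $2\,l(Q)$ dominates $2^{-m}$ for every $m\in\{n-1,n,n+1\}$ that can appear. Everything else is H\"older's inequality together with the definition of $\|\cdot\|_\psi$. (If one is willing to lose a factor, one may even dispense with the covering and simply use the single cube $(i+1)Q$ in place of the $Q_j$'s; this costs an extra factor $i^{1/p}$, which is harmless in the main argument since $\sum_i i^{1/p}\,i^{-(d+1)}|\Omega_i|$ still converges for $p>1$, but the covering argument delivers the clean bound stated in the lemma.)
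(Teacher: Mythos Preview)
Your proof is correct and follows essentially the same strategy as the paper's: decompose $P_{n,\psi}$ into $S_{n-1,\psi}+S_{n,\psi}+S_{n+1,\psi}$, cover $\Omega_i$ by $\lesssim i^{d-1}$ cubes of side comparable to $l(Q)$, apply H\"older's inequality, and invoke the definition of $\|f\|_\psi$ on each cube. The only cosmetic differences are that the paper partitions $\Omega_i$ into disjoint translates of $Q$ and applies H\"older on each piece separately, whereas you apply H\"older once over $\Omega_i$ and then cover by cubes of side $2l(Q)$; your choice of side length $2l(Q)$ is in fact slightly more careful, as it cleanly handles the index $m=n-1$.
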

\begin{proof}
Chop up the set $\Omega_i $ into pairwise disjoint cubes $ \{ Q_j \}_{j=1} ^N ,$ in a way that each of those is an image of $Q$ under a translation. Then the number $N$ of these cubes satisfies $N \lesssim i^{d-1}$. Note that for each $j \in[1,\ldots,N],$ 
\begin{equation*}
\begin{split}
&\biggl(\frac{1}{|Q_j|} \int \limits_{Q_j} |f \ast P_{n, \psi}\left(y\right)|^p dy\biggr)^{\frac{1}{p}} \lesssim\\
&\biggl(\frac{1}{|Q_j|} \int \limits_{Q_j} |f \ast S_{n-1, \psi} \left(y\right)|^p + |f \ast S_{n, \psi} \left( y \right) |^p + |f \ast S_{n+1, \psi}\left( y \right)|^p dy\biggr)^{\frac{1}{p}} \lesssim \|f\|_\psi.
\end{split}
\end{equation*}
Using this bound together with the H\"older inequality we infer that
\begin{equation*}
\begin{split}
&\int \limits_{\Omega_i} |f \ast P_{n, \psi} \left( y \right) |dy = \sum \limits _{j=1}^N \int \limits_{Q_j} |f \ast P_{n, \psi} \left(y \right) |dy \leq \\
&\sum \limits_{j=1}^N \biggl(\frac{1}{|Q_j|} \int \limits_{Q_j} | f \ast P_{n, \psi} \left( y \right)|^p dy\biggr)^{\frac{1}{p}} \cdot |Q_j| \leq \|f\|_\psi \cdot N \cdot |Q_j| \lesssim \\
&\|f\|_\psi \cdot i^{d-1} \cdot |Q| \lesssim \|f\|_\psi \cdot |\Omega_i|.
\end{split}
\end{equation*}
\end{proof}
We are ready now to finish off the estimate of the term $J$:
$$ J \lesssim \frac{1}{|Q|}\int \limits_Q \sum \limits_{n\geq - \log_2 l \left(Q \right)} \frac{\|f\|_\psi ^p 2^{-np}}{l \left( Q \right)^p}dx \lesssim\|f\|_{\psi}^p \cdot \frac{l \left(Q\right)^p}{l \left(Q\right)^p} \lesssim \|f\|_\psi^p ,$$
and the theorem follows.
\end{proof}

\section{The $\BMO$ inequality}
\begin{proof}[Proof of theorem A]
 We write $f_Q$ for $\frac{1}{|Q|}\int_Q f(x)dx$. We consider the following norm in the space $\BMO$: 
$\sup_Q (\frac{1}{|Q|}\int_Q |f(x)-f_Q|^2 dx)^{1/2}$. We need to show that $\|f\|_{\BMO}\lesssim \|f\|_D$ 
 and $\|f\|_D\lesssim \|f\|_{\BMO}$.
\subsection{The inequality $\|f\|_D\lesssim\|f\|_{\BMO}$}
We begin with first part of the proof of the theorem. First of all we fix a cube $Q$. We are going to estimate the integral
$$
\Big(\frac{1}{|Q|}\int_Q \sum_{2^{-n}\leq l(Q)}|\Delta_n f(x)|^2 dx\Big)^{1/2}.
$$
We decompose $f$ into a sum of there functions. In more details, we write
$(f-f_Q)\chi_{2Q}+ (f-f_Q)\chi_{\R^d \setminus 2Q} +f_Q=:f_1+f_2+f_3$, where $\chi_A$ is the characteristic function of a set 
$A$. Note that $f_3$ is a constant, which means that 
$\Delta_n f_3=0$. Hence we infer that 
$$
\|f\|_D^2\lesssim\Big(\frac{1}{|Q|}\int_Q \sum_{2^{-n}\leq l(Q)}|\Delta_n f_1(x)|^2 dx\Big)^{1/2}+
 \Big(\frac{1}{|Q|}\int_Q \sum_{2^{-n}\leq l(Q)}|\Delta_n f_2(x)|^2 dx\Big)^{1/2}.
$$
The first term here is a piece of cake. Indeed, using the fact that $\{\psi_n\}$ are uniformly bounded, we can conclude that
$\|Sg\|_{L^2}\lesssim \|g\|_{L^2}$ where $Sg=(\sum_{n\in\Z} |\Delta_n g|^2)^{1/2}$. So, the first term 
 is less than or equal to
\begin{align*}
&\Big(\frac{1}{|Q|} \int_{\R^d} \sum_{n\in \Z} |\Delta_n((f-f_Q)\chi_{2Q})|^2 dx\Big)^{1/2}\lesssim\\
&\Big(\frac{1}{|Q|}\int_{2Q} |f(x)-f_Q|^2 dx\Big)^{1/2} \lesssim \Big( \frac{1}{|2Q|}\int_{2Q} |f(x)-f_{2Q}|^2 dx\Big)^{1/2}
+|f_{2Q}-f_Q|.
\end{align*}
Both expressions here are less than $C\|f\|_{\BMO}$ for some universal constant $C$. For the first term it is a consequence of the definition of the space $\BMO$, and for the second one one has to sum up the inequalities $\int_Q |f(x)-f_Q| dx \leq |Q|\|f\|_{\BMO}$ and 
$\int_Q |f(x)-f_{2Q}|dx\leq \int_{2Q} |f(x)-f_{2Q}| dx \lesssim |Q|\|f\|_{\BMO}$. Hence it is left to estimate
\begin{equation}
\frac{1}{|Q|} \int_Q \sum_{2^{-n}\leq l(Q)} |\Delta_n ((f-f_Q)\chi_{\R^d \setminus 2Q})(x)|^2 dx. \label{nineth}
\end{equation}
Let $x\in Q$. Let us rewrite the expression that is inside of the integral in~\eqref{nineth} (recall that $S_n=\check{\psi}_n$):
\begin{align}
&|\Delta_n ((f-f_Q)\chi_{\R^d \setminus 2Q})(x)|=\Big| \int_{\R^d \setminus 2Q} (f(y)-f_Q) S_n(x-y) dy\Big|\leq \nonumber \\
&\sum_{k=2}^{\infty} \int_{2^k Q\setminus 2^{k-1}Q}|f(y)-f_Q| |S_n(x-y)|dy\leq \nonumber \\
\label{anton1}
&\sum_{k=2}^{\infty} \left[\biggl(\int_{2^k Q\setminus 2^{k-1}Q}|f(y)-f_Q|^2\biggr)^{\frac{1}{2}}\biggl(\int_{2^k Q\setminus 2^{k-1}Q}|S_n(x-y)|^2dy\biggr)^{\frac{1}{2}}\right].
\end{align}
Next, we infer that
\begin{align*}
\label{vanya2}
 &\biggl(\int_{2^k Q\setminus 2^{k-1}Q} |f(y)-f_Q|^2\biggr)^{1/2} dy\leq
 \biggl(\int_{2^k Q} |f(y)-f_Q|^2\biggr)^{1/2} dy\leq\\
 &\biggl(\int_{2^k Q} |f(y)-f_{2^kQ}|^2\biggr)^{1/2} dy +|2^kQ|^{1/2}\cdot|f_{2^kQ}-f_Q|.
\end{align*}
The first term here does not exceed $|2^kQ|^{1/2}\|f\|_{\BMO}$ whereas the second one is less than or equal to $k|2^kQ|^{1/2}\|f\|_{\BMO},$ which can be easily proved by induction in $k.$ Let us estimate the second factor in~\eqref{anton1}. 
Note that once $x\in Q$ and $y\in 2^k Q\setminus 2^{k-1}Q,$ one has $|x-y|\asymp 2^kl(Q).$ Thanks to this fact we write
\begin{equation*}
\begin{split}
&\int_{2^k Q\setminus 2^{k-1}Q} |S_n(x-y)|^2 dy \lesssim
 \int_{2^k Q\setminus 2^{k-1}Q} \frac{|x-y|^{2a}}{2^{2ak}l(Q)^{2a}}|S_n(x-y)|^2 dy \lesssim \\
&\frac{1}{2^{2ak}l(Q)^{2a}} \int_{\R^d} |x|^{2a}|S_n(x)|^2 dx\lesssim \ldots\;.
\end{split}
\end{equation*}
We continue our estimates, now using the Plancherel theorem and condition 3 with multi--indexes $\alpha$ satisfying $|\alpha|=a$
\begin{equation*}
 \ldots\lesssim\frac{1}{2^{2ak}l(Q)^{2a}} 2^{nd-2na}.
\end{equation*}
The inequalities above yield
\begin{align*}
&|\Delta_n ((f-f_Q)\chi_{\R^d \setminus 2Q})(x)|\lesssim \sum\limits_{k=2}^{\infty} k(2^{kd}|Q|)^{1/2}\cdot 2^{\frac{nd}{2}-na} 2^{-ak} l(Q)^{-a} \|f\|_{\BMO}= \\ 
&l(Q)^{d/2-a}2^{n(d/2-a)}\|f\|_{\BMO}\sum_{k=2}^\infty k\cdot 2^{k(d/2-a)}\lesssim l(Q)^{d/2-a}2^{n(d/2-a)}\|f\|_{\BMO}.
\end{align*}
Finally we are able to write the following estimate for the expression~\eqref{nineth}:
\begin{align*}
&\biggl(\frac{1}{|Q|}\int_Q\sum\limits_{2^{-n}\leq l(Q)}|\Delta_n ((f-f_Q)\chi_{\R^d \setminus 2Q})(x)|^2dx\biggl)\lesssim \\ 
&\|f\|_{\BMO}^2 l(Q)^{d-2a}\sum\limits_{2^{-n}\leq l(Q)}2^{n(d-2a)}\lesssim\|f\|_{\BMO}^2,
\end{align*}
and the inequality $\|f\|_D\lesssim\|f\|_{\BMO}$ follows.

Next we proceed to the reciprocal inequality.
\subsection{The reciprocal inequality}
 So, we need to prove the inequality $\|f\|_{\BMO}\lesssim \|f\|_D$. We first notice that for every constant $c$ the following inequality holds:
 \begin{equation}
  \Big(\frac{1}{|Q|}\int_Q |f(x)-f_Q|^2 dx\Big)^{1/2}\lesssim \Big(\frac{1}{|Q|}\int_Q |f(x)-c|^2 dx \Big)^{1/2}.
  \label{first}
 \end{equation}
 We remind the reader that we denote $S_n:=\F^{-1}[\psi_n]$ and $P_n:=S_{n-1}+S_n+S_{n+1}$. 
Hence $\Delta_n f=S_n\ast f$. Let us choose $c := \sum_{2^{-(n-10)}\geq l(Q)} (f\ast S_n)(y)$ where $y$ is the center of $Q$. We infer that
\begin{align}
  &\Big(\frac{1}{|Q|}\int_Q |f(x)-f_Q|^2 dx\Big)^{1/2} \lesssim \nonumber \\
  &\frac{1}{|Q|^{1/2}}\Big(\int_Q\Big|f(x)-\sum_{2^{-(n-10)}\geq l(Q)}\int_{\R^d}f(t)S_n(y-t)dt\Big|^2 dx\Big)^{1/2}. \label{oneandahalf}
\end{align} 
\noindent
Expressing $f(x)$ as $\sum_{n\in\Z}\int_{\R^d}f(t)S_n(x-t)dt$, one readily sees that the right--hand side in (\ref{oneandahalf}) is less than or equal to
\begin{align}
 &\frac{1}{|Q|^{1/2}}\Big(\int_Q\Big|\sum_{2^{-n}\leq l(Q)}\int_{\R^d}f(t)S_n(x-t)dt\Big|^2 dx\Big)^{1/2} + \label{main1}\\
 &\frac{1}{|Q|^{1/2}} \Big(\int_Q \Big| \sum_{2^{-n}\geq l(Q)} \int_{\R^d} f(t)(S_n(x-t)-S_n(y-t))dt \Big|^2 dx \Big)^{1/2}=:
 B_1+B_2.\label{main2}
\end{align}
We estimate $B_1$ and $B_2$ separately. In both estimates, the essential ingredient is a suitable decomposition of 
$\R^d$ into a union of cubes. While estimating $B_2$ we shall also use the cancellation property of the kernels $S_n$ in a similar way 
as it is done in the proof of the H\"ormander---Mihlin multiplier theorem.

\subsubsection{Estimate of term $B_1$}
We need to prove that
\begin{equation}
 \Big(\int_Q\Big|\sum_{2^{-(n-10)}\leq l(Q)}\int_{\R^d}f(t)S_n(x-t)dt\Big|^2 dx\Big)^{1/2}\lesssim |Q|^{1/2} \|f\|_D.
\label{fourandahalph}
\end{equation}
We estimate the square of the left--hand side in (\ref{fourandahalph}) with the help of the fact that $S_n=P_n\ast S_n$:  
\begin{align*}
 &\int_Q \Big|\sum_{2^{-(n-10)}\leq l(Q)}\int_{\R^d}(f \ast P_n)(u) S_n(x-u) du \Big|^2 dx\lesssim\\
 &\int_Q \Big| \sum_{2^{-(n-10)}\leq l(Q)}\int_{\R^d\setminus Q} (f\ast P_n)(u) S_n(x-u) du \Big|^2 dx+\\
 &\int_{\R^d}\Big| \sum_{2^{-(n-10)}\leq l(Q)} \int_Q (f\ast P_n)(u) S_n(x-u)du \Big|^2 dx.
\end{align*}
Hence we are done once we prove the following two inequalities:
\begin{align}
 &B_3:=\int_Q \Big| \sum_{2^{-(n-10)}\leq l(Q)}\int_{\R^d\setminus Q} (f\ast P_n)(u) S_n(x-u) du \Big|^2 dx
 \lesssim |Q| \|f\|_D^2, \label{seventh} \\
 &B_4:=\int_{\R^d}\Big| \sum_{2^{-(n-10)}\leq l(Q)} \int_Q (f\ast P_n)(u) S_n(x-u)du \Big|^2 dx
 \lesssim |Q| \|f\|_D^2. \label{eighth}
\end{align}
Thanks to the triangle inequality in the space $L^2$, the square root of the left--hand side in (\ref{seventh}) is less than or equal to
\begin{equation*}
 \sum_{2^{-(n-10)}\leq l(Q)} \Big( \int_Q \Big( \int_{\R^d \setminus Q} |S_n(x-u)|\cdot|(f\ast P_n)(u)|du \Big)^2 dx \Big)^{1/2}.
\end{equation*}
In order to estimate this expression, we denote summands there by $G_n$ and we are going to estimate
\begin{equation*}
 G_n^2=\Big( \int_Q \Big( \int_{\R^d \setminus Q} |S_n(x-u)|\cdot|f\ast P_n(u)|du \Big)^2 dx.
\end{equation*}
First, we prove the following simple statement:
\begin{lem}
If $\sigma$ is a cube with side length at least $2^{-(n-1)}$, then the following inequality holds:
$$
\bigg(\frac{1}{|\sigma|}\int_\sigma |f\ast P_n|^2\bigg)^{1/2}\lesssim \|f\|_D.
$$
\end{lem}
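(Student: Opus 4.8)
The plan is to reduce the claimed inequality to a statement about a single cube of size comparable to $2^{-n}$ and then invoke the definition of $\|f\|_D$ directly. First I would note that the definition of $\|f\|_D$ already controls, for any cube $R$, the quantity $\frac{1}{|R|}\int_R\sum_{2^{-m}\le l(R)}|\Delta_m f|^2$, and in particular the single term with $2^{-m}=2^{-n}$ whenever $l(R)\ge 2^{-n}$; so if $l(R)\ge 2^{-n}$ then $\big(\frac{1}{|R|}\int_R|\Delta_n f|^2\big)^{1/2}\le\|f\|_D$. The issue is that here we have $f\ast P_n=\Delta_{n-1}f+\Delta_n f+\Delta_{n+1}f$ rather than a single $\Delta_n f$, and the cube $\sigma$ has side at least $2^{-(n-1)}$, which still exceeds $2^{-(n+1)}$, so all three indices $n-1,n,n+1$ satisfy $2^{-m}\le l(\sigma)$.

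Concretely, I would write $f\ast P_n=\Delta_{n-1}f+\Delta_n f+\Delta_{n+1}f$ and apply the triangle inequality in $L^2(\sigma,\frac{dx}{|\sigma|})$ to get
\begin{equation*}
\bigg(\frac{1}{|\sigma|}\int_\sigma|f\ast P_n|^2\bigg)^{1/2}\le\sum_{m=n-1}^{n+1}\bigg(\frac{1}{|\sigma|}\int_\sigma|\Delta_m f|^2\bigg)^{1/2}.
\end{equation*}
Since $l(\sigma)\ge 2^{-(n-1)}\ge 2^{-m}$ for each $m\in\{n-1,n,n+1\}$, each term on the right is bounded by
$\big(\frac{1}{|\sigma|}\int_\sigma\sum_{2^{-k}\le l(\sigma)}|\Delta_k f|^2\big)^{1/2}\le\|f\|_D$, taking $Q=\sigma$ in the supremum defining $\|f\|_D$. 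Summing the three terms yields the claim with an absolute implied constant $3$.

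The only mild subtlety — and the part I would state carefully rather than skip — is the comparison of scales: one must check that $l(\sigma)\ge 2^{-(n-1)}$ indeed implies $2^{-m}\le l(\sigma)$ for all three relevant $m$, which it does since $2^{-(n+1)}<2^{-n}<2^{-(n-1)}\le l(\sigma)$. There is no genuine obstacle here; the lemma is essentially a bookkeeping consequence of the definition of $\|f\|_D$ together with the fact that $P_n$ is a sum of only three Littlewood--Paley pieces whose indices are within one of $n$. The main point of isolating it as a lemma is that it will be reused (via a covering of $Q$ or of $\R^d\setminus Q$ by translates of $\sigma$) in the estimates of $G_n$, $B_3$ and $B_4$ that follow.
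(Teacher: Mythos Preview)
Your proof is correct and follows essentially the same approach as the paper: write $f\ast P_n=\Delta_{n-1}f+\Delta_n f+\Delta_{n+1}f$ and bound by $\|f\|_D$ directly from the definition, using that all three scales satisfy $2^{-m}\le l(\sigma)$. The paper's argument is terser (it omits the triangle inequality and the scale check, declaring the bound ``obvious''), but the substance is identical.
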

\begin{proof}
Let us use the definition of $P_n$ and write:
$$
\bigg(\frac{1}{|\sigma|}\int_\sigma |f\ast P_n|^2\bigg)^{1/2}=\bigg(\frac{1}{|\sigma|}\int_\sigma |\Delta_{n-1}f+\Delta_n f+\Delta_{n+1}f|^2\bigg)^{1/2}\lesssim\|f\|_D,
$$
where the last inequality is obvious due to the definition of the norm $\|f\|_D$.
\end{proof}
Denote by $Q^-$ the cube with the same center as $Q$ and with the length of the edge equal to the length of the edge of $Q$ minus $2^{-(n-5)}.$ It is going to be convenient for us to fix a decomposition of the whole space $\R^d$ into a family of cubes $\{\sigma_k\}_{k\in \Z^d}$ with edges of the order $2^{-(n-4)}$ such that $Q^-=\bigcup_{j\in J}\sigma_{k_j}.$ Denote by $G_1$ those cubes from $\{\sigma_k\}$ that are contained in $Q^{-}$, and by $G_2$ those cubes from the family that have empty intersection with the cubes in $G_1$. Note that in this case $\R^d\setminus Q$ is contained in the union of cubes from $G_2$. We infer that
\begin{align*}
&I:=\int_{Q^-}\Big(\int_{\R^d\setminus Q}|S_n(x-u)|\cdot|f\ast P_n(u)|du \Big)^2dx \leq \\
&\sum\limits_{\sigma_l\in G_1}\int_{\sigma_l}\Big(\sum\limits_{\sigma_k\in G_2}\int_{\sigma_k}|S_n(x-u)|\cdot|f\ast P_n(u)|du \Big)^2dx\leq \\
&\sum\limits_{\sigma_l\in G_1}\int_{\sigma_l} \left(\sum\limits_{\sigma_k\in G_2}\Big(\int_{\sigma_k}\frac{|f\ast P_n(u)|^2}{|x-u|^{2a}}du\Big)^{1/2}\Big(\int_{\sigma_k}|S_n(x-u)||x-u|^{2a}du\Big)^{1/2}\right)^2dx\leq \\
&\sum\limits_{\sigma_l\in G_1}\int_{\sigma_l} \Big(\sum\limits_{\sigma_k\in G_2}\int_{\sigma_k}\frac{|f\ast P_n(u)|^2}{|x-u|^{2a}}du\Big)\Big(\sum\limits_{\sigma_k\in G_2}\int_{\sigma_k}|S_n(x-u)||x-u|^{2a}du\Big)dx\lesssim \ldots \;.
\end{align*}
Note that the second sum over $\sigma_k\in G_2$ here is obviously less than or equal to $\int_{\R^d}|x|^{2a}|S_n(x)|^2dx,$ which in turn does not exceed $2^{nd-2na}$, thanks to the Plancherel theorem. We continue the estimate, now referring to the fact that $|x-u|\asymp 2^{-n}|k-l|,$ once $x\in \sigma_k, y\in \sigma_l:$
\begin{multline*}
\ldots\lesssim\sum\limits_{\sigma_l\in G_1}\int_{\sigma_l}\bigg(\sum\limits_{\sigma_k\in G_2}\int_{\sigma_k}|f\ast P_n(u)|^2du\cdot 2^{2na}|k-l|^{-2a}\bigg)\cdot2^{nd-2na}dx\lesssim\ldots\;.
\end{multline*}
Using the lemma, we first write $\int_{\sigma_k} |f\ast P_n|^2 \lesssim 2^{-nd} \|f\|^2_D$ and then continue the chain of the inequalities:
\begin{align*}
 &\ldots\lesssim \sum\limits_{\sigma_l\in G_1}\int_{\sigma_l}\sum\limits_{\sigma_k\in G_2} ||f||_D^2 2^{-nd} 2^{2na} |k-l|^{-2a} 2^{nd-2na}dx \lesssim\\ 
 &||f||_D^2 2^{-nd}\sum\limits_{\sigma_l\in G_1}\sum\limits_{\sigma_k\in G_2}|k-l|^{-2a}.
\end{align*}
As we have already noticed, $|k-l|\asymp 2^n|x-u|$ once $x\in \sigma_k, y\in \sigma_l,$ and hence
\begin{equation*}
2^{-2nd}|k-l|^{-2a}\asymp \int_{\sigma_l}\int_{\sigma_k}|k-l|^{-2a}dudx\asymp 2^{-2an}\int_{\sigma_l}\int_{\sigma_k}|x-u|^{-2a}dudx,
\end{equation*}
and consequently $|k-l|^{-2a}\asymp 2^{2nd}2^{-2an}\int_{\sigma_l}\int_{\sigma_k}|x-u|^{-2a}.$ Let us use this inequality in the estimate of the term $I.$
\begin{align*}
&I\lesssim ||f||_D^2 2^{nd-2an} \sum\limits_{\sigma_l\in G_1}\sum\limits_{\sigma_k\in G_2}\int_{\sigma_l}\int_{\sigma_k} |x-u|^{-2a}dudx=\\
&||f||_D^2 2^{nd-2an} \int_{Q^-}\int_{\R^d \setminus Q'}|x-u|^{-2a}dudx,
\end{align*}
where by $Q'$ we denote the cube which is the union of all cubes that do not lie in $G_2$. In fact, $Q'$ is the cube with the same center as $Q^-$ and with the length of the edge equal to the length of the edge of $Q^-$ plus $2^{-(n-3)}.$

Let us now concentrate on the integral 
$$
\int_{Q^-}\int_{\R^d\setminus Q'}|x-u|^{-2a}dudx.
$$ 
For sake of convenience we shift the cube $Q^-$ in the way that $Q^-=[0,l(Q^-)]^d.$ With no loss of generality we take the inner integral here over the set 
\begin{multline*}
\{u=(u_1,\ldots,u_d)\in \R^d: u_1,\ldots u_k\in[-2^{-(n-3)},l(Q^-)+2^{-(n-3)}],\\ u_{k+1},\ldots,u_d\in (-\infty,-2^{-(n-3)})\},
\end{multline*}
where $0\leq k\leq d-1.$ It is obvious that $|x-u|$ can be substituted by $|x_1-u_1|+\ldots+|x_d-u_d|.$ Direct calculation of the corresponding integral shows that
\begin{align*}
&\int_{-2^{-(n-3)}}^{l(Q)+2^{-(n-3)}} (|x_1-u_1|+\ldots+|x_d-u_d|)^{-2a}du_1 \lesssim \\
&(|x_2-u_2|+\ldots+|x_d-u_d|)^{-2a+1}.
\end{align*}
After that we integrate the previous line $k-1$ times with respect to variables $u_1,\ldots,u_k$ the
right-hand side transforms to $(|x_{k+1}-u_{k+1}|+\ldots+|x_d-u_d|)^{-2a+k}$. The remaining integrals
can be calculated explicitly:
\begin{align*}
&\int_{-\infty}^{-2^{-(n-3)}} (|x_{k+1}-u_{k+1}|+\ldots+|x_d-u_d|)^{-2a+k}du_{k+1} \lesssim \\
&(|x_{k+1}+2^{-(n-3)}|+|x_{k+2}-u_{k+2}|+\ldots+|x_d-u_d|)^{-2a+k+1}.
\end{align*}
After integrating this inequality with respect to the remaining variables, we see that it is left to estimate the integral $$\int_0^{l(Q^-)}\ldots\int_0^{l(Q^-)}(|x_{k+1}+2^{-(n-3)}|+\ldots+|x_{d}+2^{-(n-3)}|)^{-2a+d} dx_1\ldots x_d.$$ It is obvious that this integral is going to be maximal when $k=d-1.$ In the latter case it is less than or equal to 
$$
H:=l(Q^-)^{d-1}\int_0^{l(Q^-)}(x_d+2^{-(n-3)})^{-2a+d}dx_d.
$$ 
In order to bound $H$, we treat two different cases separately. First, if $d$ is even, then $a=(d+2)/2$ and hence $H\leq l(Q)^{d-1}2^n.$ In this case we readily conclude that 
$$
I\lesssim ||f||_D^2 l(Q)^{d-1}2^{-n}.
$$ 
If $d$ is odd, then $d=(d+1)/2$ and 
$$
H\leq l(Q)^{d-1}(\log(l(Q)+2^{-n})-\log(2^{-n}))=l(Q)^{d-1}\log(1+2^nl(Q)).
$$
These inequalities yield that
$$
I\lesssim ||f||_D^2 l(Q)^{d-1}2^{-n}\log(1+2^nl(Q)).
$$ 

This implies that in either case
\begin{align*}
\label{kuminus}
\int_{Q^-}\Big(\int_{\R^d\setminus Q}|f\ast P_n(u)|\cdot|S_n(x-u)|du\Big)^2dx\lesssim ||f||_D^2 l(Q)^{d-1}2^{-n}\log(1+2^nl(Q)).
\end{align*}
In order to finish the estimate of the term $G_n^2$ it is left to bound 
$$
\int_{Q\setminus Q^-}\Big(\int_{\R^d\setminus Q}|f\ast P_n(u)|\cdot|S_n(x-u)|du\Big)^2dx
$$ 
from above. To this end, we chop up $Q\setminus Q^-$ into the cubes $\{q_l \}$ with edges of length of order $2^{-(n-4)}.$ There are about $l(Q)^{d-1}2^{n(d-1)}$ of these. Denote by $q_l^+$ the cube with the same center as $q_l$ and with the length of the edge equal to $2^{-(n-5)}$ plus the length of the edge of $q_l$. In analogy with what we have done above, we obtain the following 
\begin{align*}
&\int_{q_l}\Big(\int_{\R^d\setminus q_l^+}|f\ast P_n(u)|\cdot|S_n(x-u)|du\Big)^2dx\lesssim ||f||_D^2  l(q_l)^{d-1}2^{-n}2\log(1+2^n l(q_l))\lesssim\\ 
&||f||_D^2 2^{-nd}.
\end{align*}

In order to estimate the integral that rest we first use the H\"older inequality, and then the facts that $\int_{q_l^+}|f\ast P_n(u)|^2du\lesssim ||f||_D^2 2^{-nd}$ and  $\int_{q_l^+}|S_n(x-u)|^2du\leq \|S_n\|_{L^2}^2=\|\psi_n\|_{L^2}^2\lesssim 2^{nd}$:
\begin{align*}
&\int_{q_l}\Big(\int_{q_l^+}|f\ast P_n(u)|\cdot|S_n(x-u)|du\Big)^2dx\lesssim \\
&\int_{q_l}\Big(\int_{q_l^+}|f\ast P_n(u)|^2du\cdot\int_{q_l^+}|S_n(x-u)|^2du\Big)dx\lesssim \|f\|_D^2 2^{-nd}.
\end{align*}
Taking into account the fact that there are about $l(Q)^{d-1}2^{n(d-1)}$ of cubes in the collection $\{q_l\}$ we infer that
\begin{align*}
&\int_{Q\setminus Q'}\Big(\int_{\R^d\setminus Q}|f\ast P_n(u)|\cdot|S_n(x-u)|du\Big)^2dx\lesssim \\
&\int_{q_l}\Big(\int_{q_l^+}|f\ast P_n(u)|^2du\cdot\int_{q_l^+}|S_n(x-u)|^2du\Big)dx\lesssim \|f\|_D^2 2^{-nd}\lesssim \|f\|_D^2 l(Q)^{d-1} 2^{-n}.
\end{align*}
 Finally the estimates yield
$$G_n^2\lesssim \|f\|_D^2 l(Q)^{d-1} 2^{-n} \log(1+ 2^nl(Q)).$$
Now we have all the ingredients to finish the estimate of the term $B_3$:
\begin{align*}
&\frac{B_3^{1/2}}{|Q|^{1/2}}\lesssim \frac{1}{|Q|^{1/2}}\sum\limits_{n\geq -\log_2l(Q)}\|f\|_D l(Q)^{(d-1)/2} 2^{-n/2}\log_2(1+2^nl(Q))^{1/2}\asymp\\
&\|f\|_D l(Q)^{-1/2}\sum\limits_{m\geq 0}2^{-(m-\log_2l(Q))/2}\log_2(1+2^{m-\log_2l(Q)}l(Q))^{1/2}=\\
&\|f\|_D \sum\limits_{m\geq 0}2^{-m/2}\log_2(1+2^m)^{1/2} \lesssim \|f\|_D,
\end{align*}
and $B_3$ is estimated. 

We proceed to the inequality (\ref{eighth}). Note that the functions
\begin{align*}
 &x\mapsto \int_Q (f\ast P_n)(u) S_{n_1}(x-u) du,\\
 &x\mapsto \int_Q (f\ast P_n)(u) S_{n_1}(x-u) du
\end{align*}
are orthogonal in $L^2(\R^d)$ once we suppose that $|n_1-n_2|\geq 2.$
This is a direct consequence of the fact that the functions $\psi_{n_1}$ and $\psi_{n_2}$ have non--intersecting supports. Thanks to this orthogonality we can now estimate the expression $B_4$ from the left--hand side of (\ref{eighth}):
\begin{align*}
 &B_4=\sum_{2^{-(n-10)}\leq l(Q)} \int_{\R^d} \Big| \int_Q (f\ast P_n)(u) S_n(x-u) du \Big|^2 dx=\\
 &\sum_{2^{-(n-10)}\leq l(Q)} \int_{\R^d \setminus Q} \Big| \int_Q (f\ast P_n)(u) S_n(x-u) du \Big|^2 dx+\\
 &\sum_{2^{-(n-10)}\leq l(Q)} \int_{Q} \Big| \int_Q (f\ast P_n)(u) S_n(x-u) du \Big|^2 dx.
\end{align*}
The estimate of the first term here can be performed in the same way as the estimate of the term $B_3.$ The second term is less than or equal to two times the sum
\begin{align*}
 &\sum_{2^{-(n-10)}\leq l(Q)} \int_Q \Big|  \int_{\R^d} (f\ast P_n)(u) S_n(x-u) du \Big|^2 dx+\\
 &\sum_{2^{-(n-10)}\leq l(Q)} \int_Q \Big|  \int_{\R^d\setminus Q} (f\ast P_n)(u) S_n(x-u)  du \Big|^2 dx.
\end{align*}
The second sum here has been already estimated. Taking into account the fact that $P_n\ast S_n=S_n$, we infer that the first one equals
\begin{equation*}
 \sum_{2^{-(n-10)}\leq l(Q)}\int_Q \Big| \int_{\R^d} f(t)S_n(x-t)dt \Big|^2 dx,
\end{equation*}
which in turn does not exceed $\|f\|_D^2 |Q|$ by the definition of the norm $\|f\|_D$. Hence the term $B_1$ is estimated.

\subsubsection{Estimate of the term $B_2$}
In order to finish the proof of the theorem we need to prove the following inequality:
$$
B_2=\frac{1}{|Q|^{1/2}} \Big( \int_Q \Big| \sum_{2^{-(n-10)}>l(Q)} \int_{\R^d} f(t)(S_n(x-t)-S_n(y-t)) dt \Big|^2 dx \Big)^{1/2}\lesssim \|f\|_D.
$$
We remind the reader that $y$ here is the center of the (fixed) cube $Q$. Once again using the fact that $S_n=S_n\ast P_n$, we infer that the following inequality holds:
$$
B_2\leq \frac{1}{|Q|^{1/2}} \Big( \int_Q \Big| \sum_{2^{-(n-10)}>l(Q)} \int_{\R^d} |(f\ast P_n)(u)| |S_n(x-u)-S_n(y-u)| du \Big|^2 dx \Big)^{1/2}.
$$
Without loss of generality, we assume that $Q$ is centered at $0$ so that $y=0$. We are now going to estimate the terms of this sum. In more details we shall consider the following expression:
\begin{equation}
\label{toxamain}
\int_{\R^d} |(f\ast P_n)(-u)| |S_n(u-x)-S_n(u)|du.
\end{equation}

Let us decompose $\R^d$ into the cubes $\{\delta_k\}_{k\in\Z^d}$ with edge length $2^{-(n-10)}$ (so that the center of $\delta_k$ is
$2^{-(n-10)}k$). We denote by $G_m$, $m\in\mathbb{N}$, the union of the cubes from $\{\delta_k\}$ such that the maximal coordinates of their centers lie 
between $2^{-(n-10)+m}$ and $2^{-(n-10)+m+1}$ and by $G_0$ the union of the remaining cubes. 
Hence we can write the following inequality:

\begin{align}
&\sum_{m=1}^\infty \int_{G_m} |(f\ast P_n)(-u)| |S_n(u-x)-S_n(u)|du\leq \nonumber \\
&\sum_{m=1}^\infty \Big( \int_{G_m} |(f\ast P_n)(u)|^2 du \Big)^{1/2} \Big( \int_{G_m}|S_n(u-x)-S_n(u)|^2 du \Big)^{1/2}. \label{toxa1}
\end{align}
Lemma 2 yields
$$
\Big( \int_{G_m} |(f\ast P_n)(u)|^2 du \Big)^{1/2}\lesssim |G_m|^{1/2}\|f\|_D\asymp 2^{md/2}\cdot 2^{-nd/2}\cdot \|f\|_D.
$$

Let us now concentrate on the second factor in~\eqref{toxa1}:
$$
\Big( \int_{G_m}|S_n(u-x)-S_n(u)|^2 du \Big)^{1/2} \leq \Big( \int_{\{|u|\geq 2^{-n+m}\}}|S_n(u-x)-S_n(u)|^2 du \Big)^{1/2}.
$$ 

In order to estimate this integral, the cancellation of $\widetilde{S_n}(u):=S_n(u-x)-S_n(u)$ must come into play. For
any $r>1$ we use the notation $r'=r/(r-1)$. Let us fix a number $q$, $1<q<\infty$, which is greater than 
$d/(2a-d)$. The H\" older and the Hausdorff--Young inequalities yield
\begin{align*}
&\int_{|u|\geq 2^{-n+m}} |\widetilde{S_n}(u)|^2 du \lesssim 
\sum_{j=1}^d \Big( \int_{\R^d} |u_j^a \widetilde{S_n}(u)|^{2q'} \Big)^{1/q'} \Big( \int_{\{|u|\geq 2^{-n+m}\}} |u|^{-2aq} du \Big)^{1/q}\asymp\\
&\sum_{j=1}^d 2^{(-n+m)(-2aq+d)/q} \|u_j^a \widetilde{S_n}(u)\|_{2q'}^2\lesssim  \sum_{j=1}^d 2^{(-n+m)(-2aq+d)/q} 
\|D_{\xi_j}^a \widehat{\widetilde{S_n}}(\xi)\|_{(2q')'}^2.
\end{align*}
Note that $(2q')'=2q/(q+1)$. Hence the expression~\eqref{toxa1} does not exceed
\begin{align*}
&\sum_{m=1}^\infty 2^{md/2}\cdot 2^{-nd/2}\cdot\|f\|_D\cdot 2^{(-n+m)(-2aq+d)/2q}\cdot
\max_j \|D_{\xi_j}^a \wh{\wt{S_n}}(\xi)\|_{\frac{2q}{q+1}}=\\
&\|f\|_D\cdot\max_j  \|D_{\xi_j}^a \wh{\wt{S_n}}(\xi)\|_{\frac{2q}{q+1}} \cdot 2^{-nd/2}\cdot 2^{na} \cdot 2^{-nd/2q}
\cdot\sum_{m=1}^\infty 2^{m(\frac{d}{2}-a+\frac{d}{2q})}.
\end{align*}
Since $q$ is greater than $d/(2a-d)$, the sum here is equal to some finite constant and we are left with the expression
\begin{equation*}
\|f\|_D\cdot\max_j  \|D_{\xi_j}^a \wh{\wt{S_n}}(\xi)\|_{\frac{2q}{q+1}} \cdot 2^{-nd/2}\cdot 2^{na} \cdot 2^{-nd/2q}.
\end{equation*}
In order to estimate $ \|D_{\xi_j}^a \wh{\wt{S_n}}(\xi)\|_{2q/(q+1)}$ we note that 
$\wh{\wt{S_n}}(\xi)=\psi_n(\xi)(e^{2\pi i x\cdot\xi}-1)$ and hence $\supp \wh{\wt{S_n}}\subset\{2^{n-1}\leq|\xi|<2^{n+1}\}$. It is easy to see that if $\gamma\leq a$, then for $x\in Q$, $\xi\in\supp \wh{\wt{S_n}}$ there holds
$|D_{\xi_j}^a[e^{2\pi i x\cdot\xi}-1]|\lesssim l(Q)2^{n(1-\gamma)}$. Indeed, for $\gamma=0$ it is true because 
$|e^{2\pi i x\cdot\xi}-1|\lesssim |x\cdot\xi|\lesssim 2^n l(Q)$ and for $\gamma>0$ it is a consequence of the inequality
$D_{\xi_j}^\gamma[e^{2\pi i x\cdot \xi}]\lesssim |x|^\gamma\lesssim l(Q)(2^{-n})^{\gamma-1}$.

Using this estimate and the Leibnitz rule we infer that
$$
|D_{\xi_j}^a \wh{\wt{S_n}}(\xi)|\lesssim l(Q)\sum_{r=0}^a |D_\xi^r \psi_n(\xi)|\cdot 2^{-n(a-r-1)}.
$$
This allows us to write
$$
\|D_{\xi_j}^a \wh{\wt{S_n}}(\xi)\|_{\frac{2q}{q+1}} \lesssim l(Q) \cdot\max_{0\leq r\leq a} 2^{-n(a-r-1)}\|D_{\xi_j}^r\psi_n(\xi)\|_{\frac{2q}{q+1}}.
$$
The norm here can be estimated by means of the H\" older inequality:
\begin{align*}
&\|D_{\xi_j}^r\psi_n(\xi)\|_{\frac{2q}{q+1}}^{\frac{2q}{q+1}}= \int_{2^{n-1}\leq |\xi|< 2^{n+1}} |D_{\xi_j}^r \psi_n(\xi)|^{\frac{2q}{q+1}} d\xi \leq\\ 
&|{\{2^{n-1}\leq |\xi|< 2^{n+1}\}}|^{1/(q+1)} \Big(\int |D_{\xi_j}^r \psi_n(\xi)|^2 d\xi \Big)^{q/(q+1)}.
\end{align*}
The first factor in the last line is less than or equal to $2^{nd/(q+1)}$ whereas the second one does not exceed $(2^{nd-2nr})^{q/(q+1)}$ thanks to the third assumption of the theorem. So we conclude that the following inequality holds:
$$
\|D_{\xi_j}^r\psi_n(\xi)\|_{\frac{2q}{q+1}} \lesssim 2^{\frac{nd}{2q}} \cdot 2^{\frac{nd}{2}-nr}.
$$
Finally, collecting all the estimates, we see that the expression~\eqref{toxa1} is bounded from above by $2^n l(Q) \|f\|_D$.

In order to finish the estimate of the expression~\eqref{toxamain} it is left to deduce a similar bound for the integral
$$
\int_{G_0} |f\ast P_n(-u)| |S_n(u-x)-S_n(u)| du.
$$
Note that $G_0$ is composed of the fixed number of cubes: this number depends on the dimension $d$ only. Hence we are done once we estimate integrals over $\delta$ for $\delta\in G_0$, namely
$$
\int_{\delta} |f\ast P_n(-u)| |S_n(u-x)-S_n(u)| du.
$$
Using basic properties of the Fourier transformation along with the H\" older inequality, we infer that
\begin{align*}
&|S_n(u-x)-S_n(u)|\lesssim l(Q) \|\nabla S_n\|_{\infty}\lesssim l(Q) \||\xi|\psi_n(\xi)\|_1\lesssim\\ 
&l(Q)2^n\|\psi_n\|_{1} \lesssim l(Q)2^n 2^{nd/2}\|\psi_n\|_2\lesssim 2^{nd} l(Q) 2^n.
\end{align*}
This implies the following bound
\begin{align*}
&\int_{\delta} |f\ast P_n(-u)| |S_n(u-x)-S_n(u)| du\lesssim 2^{nd} l(Q) 2^n\int_\delta |f\ast P_n(-u)| du\leq\\
&2^{nd} l(Q) 2^n |\delta|^{1/2} \bigg(\int_\delta |f\ast P_n(-u)|^2 du\bigg)^{1/2}\lesssim 2^{nd} |\delta| l(Q) 2^n \|f\|_D\lesssim\|f\|_D l(Q) 2^n.
\end{align*}
Note that here we once again used Lemma 2.

Finally, we are ready to finish off the estimate of term $B_2$ and thus the proof of the theorem:
$$
B_2\lesssim \sum_{2^{-(n-10)}>l(Q)} \|f\|_D\cdot l(Q)\cdot 2^n \lesssim \|f\|_D.
$$

\end{proof}


\begin{thebibliography}{99}
\bibitem{Bochkarev1} S. V. Bochkarev, \emph{Vall\'e-Poussin series in the spaces $\BMO$, $L_1$, and $H^1(D)$, and multiplier inequalities}, Proc. Steklov Inst. Math., 210 (1995), 30--46.
\bibitem{Bochkarev2} S. V. Bochkarev, \emph{de la Vall\'ee--Poussin means of Fourier series for the quadratic spectrum and for spectra with power--like density}, Russian Math. Surveys, \textbf{69}:1 (2014), 119--152.
\bibitem{RdF} J. Garcia-Cuerva, J. L. Rubio De Francia, \emph{Weighted norm inequalities and related topics}, North-Holland
 Math. Stud., vol. 116. Notas. Math., vol. 104, North-Holland, Amsterdam, 1985.
\bibitem{ClassGrafakos} L. Grafakos, \emph{Classical Fourier Analysis}. 3 edition. Springer, 2014.
\bibitem{ModGrafakos} L. Grafakos, \emph{Modern Fourier Analysis}. 3 edition. Springer, 2014.
 \bibitem{china} Yongsheng Han and Dachun Yang, \emph{New characterization of $\BMO(\R^n)$ space}, Bol. Soc. Mat. Mexicana (3)
 \bibitem{finiki} Pekka Koskelaa, Dachun Yang and Yuan Zhou, \emph{Pointwise characterizations of Besov and Triebel--Lizorkin spaces and quasiconformal mappings}, Advances in Mathematics, Volume 226, Issue 4, 2011, Pages 3579--3621.
\bibitem{tselvasl}
A. Tselishchev, I. Vasilyev, \emph{On an equivalent norm on BMO}, Investigations on linear operators and function theory. Part 45, Zap. Nauchn. Sem. POMI, 456, POMI, St. Petersburg, 2017, 37--54 
\end{thebibliography}
\end{document}